\theoremstyle{plain}
\newtheorem*{acknowledgement}{Acknowledgement}
\newtheorem{lemma}{\bf Lemma}
\newtheorem{theorem}{\bf Theorem}
\numberwithin{equation}{section}
\begin{document}
\title[critical metrics on $4$-manifolds]{Critical metrics on $4$-manifolds\\ with harmonic anti-self dual Weyl tensor}

\author{Emanuel  Viana}
\address{Instituto Federal de Educa\c c\~ao, Ci\^encia e Tecnologia do Cear\'a - IFCE, Campus Caucaia, Caucaia / CE, Brazil.}
\email{emanuel.mendonca@ifce.edu.br}

\subjclass[2010]{Primary 53C25, 53C20, 53C21; Secondary 53C65}
\keywords{volume functional; critical metrics; compact manifolds; boundary}
\date{December 9, 2021}

\begin{abstract}
In this paper, we study $4$-dimensional simply connected, compact critical metric of the volume functional with harmonic anti-self dual Weyl tensor. We show that a $4$-dimensional simply connected, compact critical metric of the volume functional with harmonic anti-self dual Weyl tensor and satisfying  a suitable pinching condition is isometric to a geodesic ball in a simply connected space form $\mathbb{R}^{4}$, $\mathbb{H}^{4}$ or $\mathbb{S}^{4}.$ 
\end{abstract}

\maketitle

\section{Introduction}
\label{intro}

Let $(M^{n},\,g)$ be a connected compact Riemannian manifold with dimension $n$ at least three. Following the terminology adopted in \cite{BDR,BDRR}, we say that $g$ is, for brevity, {\it Miao-Tam critical metric} if there is a smooth function $f$ on $M^n$ such that $f^{-1}(0)=\partial M$ satisfying the overdetermined-elliptic system
\begin{equation}
\label{eqMiaoTam} \mathfrak{L}_{g}^{*}(f)=-(\Delta f)g+Hess\, f-fRic=g,
\end{equation} where $\mathfrak{L}_{g}^{*}$ is the formal $L^{2}$-adjoint of the linearization of the scalar curvature ope\-rator $\mathfrak{L}_{g},$ which plays a fundamental role in problems related to prescribing the scalar curvature function. Here, $Ric,$ $\Delta$ and $Hess$ stand, respectively, for the Ricci tensor, the Laplacian operator and the Hessian form on $(M^n,\,g).$ For more details on such a subject, we refer the reader to \cite{BalRi,BDR,BDRR,CEM,miaotam,miaotamTAMS,yuan}.

Miao and Tam \cite{miaotam} showed that such critical metrics arise as critical points of the volume functional on $M^n$ when restricted to the class of metrics $g$ with prescribed constant scalar curvature such that $g_{|_{T \partial M}}=h$ for a prescribed Riemannian metric $h$  on the boundary. In the work \cite{CEM}, Corvino, Eichmair and Miao also considered metrics arising from the modified problem of finding stationary points for the vo\-lume functional on the space of metrics whose scalar curvature is equal to a given constant, which is directly related to the Miao-Tam critical metrics. We point out that if there exists a nontrivial function $f$ satisfying \eqref{eqMiaoTam} then $g$ must have constant scalar curvature $R$ (see \cite[Theorem 7]{miaotam}). Explicit examples of Miao-Tam critical metrics were obtained on connected domain with compact closure in $\Bbb{R}^n,$ $\Bbb{H}^n$ and $\Bbb{S}^n$ (see \cite{miaotam}). Some results obtained in \cite{miaotam} suggest that critical metrics with a prescribed boundary metric seem to be rather rigid. From this perspective, it is natural to ask whether these quoted examples are the only simply connected Miao-Tam critical metrics with connected boundary.

It is well known that $4$-dimensional manifolds display fascinating and peculiar features, for this reason much attention has been given to this specific dimension. Many peculiar features on oriented $4$-manifolds directly rely on the fact that the bundle of $2$-forms $\Lambda^2$ can be invariantly decomposed as a direct sum $\Lambda^2=\Lambda_{+}^{2} \oplus \Lambda_{-}^{2}$ such that $$W=W^{+}\oplus W^{-}$$ where $W^{\pm}$ are called the {\it self-dual} and {\it anti-self dual} parts of $W,$ respectively. Half conformally flat metrics are also known as self-dual or anti-self dual if $W^{-} = 0$ or $W^{+} = 0$, respectively. In order to motivate our main result, we now briefly recall a few relevant results on the rigidity of Miao-Tam critical metrics in dimension $n=4$ under vanishing conditions involving zero, first, or specific second order derivatives of the Weyl tensor $W,$ which is defined by \eqref{weyl} in Section \ref{secThm1}. Initially, Miao and Tam \cite{miaotamTAMS} proved that a locally conformally flat  (i.e., $W=0$) $4$-dimensional simply connected, compact Miao-Tam critical metric $(M^{4},\,g,\,f)$ with boundary isometric to a standard sphere $\mathbb{S}^{3}$ must be isometric to a geodesic ball in a simply connected space form $\mathbb{R}^{4}$, $\mathbb{H}^{4}$ or $\mathbb{S}^{4}.$ Barros, Di\'{o}genes and Ribeiro \cite{BDR} proved that a Bach-flat (i.e., $B_{ij}=0,$ where $B_{ij}$ is given by \eqref{bach}) simply connected, $4$-dimensional compact Miao-Tam critical metric with boundary isometric to a standard sphere $\mathbb{S}^{3}$ must be isometric to a geodesic ball in a simply connected space form $\mathbb{R}^{4}$, $\mathbb{H}^{4}$ or $\mathbb{S}^{4}.$ This result includes, in particular, the half-conformally flat case (i.e., $W^+=0$). Recently, Kim and Shin \cite{Kim} showed that  a simply connected, compact Miao-Tam critical metric with harmonic Weyl tensor (i.e., ${\rm div} (W)=0$) and boundary isometric to a standard sphere $\mathbb{S}^{3}$ must be isometric to a geodesic ball in a simply connected space form $\mathbb{R}^{4}$, $\mathbb{H}^{4}$ or $\mathbb{S}^{4}$ (see also \cite{BBB,BalRi2} for an alternative proof).

For what follows, remember that viewing $W^{+}$ as a tensor of type $(0, 4)$, the tensor $W^+$ is harmonic if ${\rm{div}}\, (W^+) = 0,$ where ${\rm{div}}$ is the divergence defined for any $(0, 4)$-tensor $S$ by $${\rm{div}}\,S(X_1,X_2,X_3) = trace_{g}\{(Y,Z) \to \nabla_{Y}S(Z,X_1,X_2,X_3)\},$$ where $g$ is the metric of $M^4$. Furthermore, it should be emphasized that every $4$-dimensional Einstein manifold has harmonic tensor $W^+$. Therefore, it is natural to ask which geometric implications has the assumption of the harmonicity of the tensor $W^+$ on Miao-Tam critical metrics.

In this paper, we have established the following result. 

\begin{theorem}
\label{thmA}
Let $(M^{4},\,g,\,f)$ be a $4$-dimensional simply connected, compact Miao-Tam critical metric with boundary isometric to a standard sphere $\mathbb{S}^{3}.$ Suppose that $M^4$ has harmonic anti-self dual Weyl tensor (i.e., ${\rm{div}}\, (W^+) = 0$) and satisfies the inequality
\begin{equation}\label{ineth}
3|Ric|^2\leq 3R^2_{44}+(R-R_{44})^2,
\end{equation} where $R_{44}=Ric\left(\frac{\nabla f}{|\nabla f|},\frac{\nabla f}{|\nabla f|}\right).$ Then $(M^{4},\,g)$ is isometric to a geodesic ball in a simply connected space form $\mathbb{R}^{4}$, $\mathbb{H}^{4}$ or $\mathbb{S}^{4}.$
\end{theorem}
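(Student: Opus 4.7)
My first move is purely algebraic. On the open set $\Omega=\{\nabla f\neq 0\}$, choose an orthonormal frame $\{e_1,e_2,e_3,e_4\}$ with $e_4=\nabla f/|\nabla f|$ and set $R_{ij}=Ric(e_i,e_j)$. Expanding $R$ and $|Ric|^2$ in this frame one obtains the pointwise identity
\[
3R_{44}^2+(R-R_{44})^2-3|Ric|^2
=-\Bigl[3\sum_{i=1}^{3}R_{ii}^2-\Bigl(\sum_{i=1}^{3}R_{ii}\Bigr)^{\!2}\Bigr]
-6\sum_{1\le i<j\le 4}R_{ij}^2,
\]
and both bracketed expressions are nonnegative, the first by Cauchy--Schwarz applied to $(R_{11},R_{22},R_{33})$ and $(1,1,1)$. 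Hence the pinching \eqref{ineth} \emph{forces equality} on $\Omega$, giving $R_{14}=R_{24}=R_{34}=0$ (so $\nabla f$ is an eigenvector of $Ric$, with eigenvalue $\mu:=R_{44}$) and $R_{11}=R_{22}=R_{33}=(R-\mu)/3$. Equivalently, $\mathring{Ric}$ is rank one with the explicit form $\mathring{Ric}=\tfrac{4\mu-R}{3}\bigl(e_4^{\flat}\otimes e_4^{\flat}-\tfrac{1}{4}g\bigr)$.

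\textbf{Step 2: Using Miao--Tam and the harmonicity of $W^+$.} From \eqref{eqMiaoTam}, $Hess\,f=fRic+(1+\Delta f)g$, and tracing gives $\Delta f=-(fR+4)/3$, so $\mathring{Hess}\,f=f\mathring{Ric}$ inherits the rank-one structure of Step~1. It follows that $Hess\,f(\nabla f,X)=0$ for every $X\perp\nabla f$, hence $|\nabla f|$ depends only on $f$, the regular level sets of $f$ are totally umbilical with constant mean curvature, and $g$ locally assumes a warped-product form. Because $R$ is constant, the Cotton tensor simplifies to $C_{ijk}=\nabla_iR_{jk}-\nabla_jR_{ik}$, and the rank-one form of $\mathring{Ric}$ reduces its nonzero components to a single radial derivative of $\mu$. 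Plugging this into the standard identity expressing ${\rm div}(W^+)$ as a specific combination of $C$ and its Hodge dual in the last two indices, the hypothesis ${\rm div}(W^+)=0$ becomes a pointwise relation tying $W^+$ to $\mathring{Ric}$ and $\nabla f$. Multiplying this relation by $f$, contracting with $W^+$ and integrating by parts on $M^4$ using $f|_{\partial M}=0$ produces an identity of the form $\int_M f\,\Phi\,dv_g=0$ with $\Phi\ge 0$; since $f>0$ in the interior, $\Phi\equiv 0$, forcing $W^+\equiv 0$.

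\textbf{Step 3: Conclusion and main obstacle.} Once $W^+\equiv 0$ the metric is half-conformally flat and in particular Bach flat, so the rigidity theorem of Barros, Di\'ogenes and Ribeiro \cite{BDR} applies and identifies $(M^4,g)$ with a geodesic ball in $\mathbb{R}^4$, $\mathbb{H}^4$ or $\mathbb{S}^4$. The main technical obstacle is the middle step: extracting a nonnegative integrand $\Phi$ of definite sign requires bookkeeping the Codazzi form of $\nabla\mathring{Ric}$, the second Bianchi identity, and the self-dual projection of the Cotton tensor in the frame adapted to $\nabla f$, and this delicate but finite calculation is where most of the work lies. An alternative route, which avoids dealing with $W^+$ directly, is to exploit the warped-product structure of Step~2 all the way: the Codazzi equation for $Ric$ together with constant $R$ and the boundary $\partial M\simeq\mathbb{S}^3$ should pin down the warping function as that of a space-form ball and yield the conclusion without invoking \cite{BDR}.
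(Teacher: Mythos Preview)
Your Step~1 is correct and in fact sharper than the paper's own approach: the paper needs the hypothesis ${\rm div}(W^+)=0$ to show that $\nabla f$ is a Ricci eigenvector (via a system extracted from $T_{ijk}+T_{\bar\imath\bar\jmath k}=0$), whereas your Cauchy--Schwarz identity extracts the full rank-one structure of $\mathring{Ric}$ directly from the pinching \eqref{ineth} alone. The umbilicality of level sets, constancy of $|\nabla f|$ on $\Sigma_c$, and hence $W_{4ijk}=0$ for tangential indices then follow exactly as in the paper's Lemma~3.

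The genuine gap is Step~2. You propose to multiply an unspecified pointwise relation by $f$, contract with $W^+$, integrate by parts, and obtain $\int_M f\,\Phi\,dv_g=0$ with $\Phi\ge 0$, concluding $W^+\equiv 0$. But you never write down $\Phi$, never verify its sign, and never check the boundary terms; you yourself flag this as ``where most of the work lies.'' There is no indication that such a nonnegative integrand actually emerges here, and I do not see a mechanism that would force $W^+$ itself to vanish from these hypotheses (indeed the theorem does not claim half-conformal flatness, only that the metric is a space-form ball). The alternative warped-product route you mention is likewise only sketched.

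The paper's route from this point is different and completely avoids showing $W^+=0$. Using the identity $fC_{ijk}=T_{ijk}+W_{ijkl}\nabla_l f$, the structure from Step~1 gives $T_{ijk}=0$ and $W_{ijk4}=0$ for $i,j,k\in\{1,2,3\}$, hence $C_{ijk}=0$ for tangential indices; similarly $C_{ij4}=0$ for all $i,j$. The remaining components $C_{i4k}$ are then killed by the hypothesis ${\rm div}(W^+)=0$, which in components reads $C_{klj}+C_{\bar k\bar l j}=0$; pairing $(k,l)=(1,2)$ with $(\bar k,\bar l)=(3,4)$, etc., gives $C_{34k}=C_{24k}=C_{14k}=0$. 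Thus $C\equiv 0$ on the dense regular set, so ${\rm div}(W)=0$ everywhere, and the harmonic-Weyl rigidity theorem of Kim--Shin finishes the proof. Your Step~1 plugs into this argument without change, so the fix is to replace the speculative integral estimate by this short Cotton-vanishing computation.
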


Clearly, if we change the condition ${\rm{div}}\,(W^+) = 0$ by the condition ${\rm{div}}\,(W^-) = 0$ the conclusion of Theorem \ref{thmA} is exactly the same.

\section{Background}
\label{secThm1}

In this section, we recall some basic tensors and informations that will be useful in the proof of main results.Throughout the paper the Einstein convention of summing over the repeated indices will be adopted.

Let $(M^n,\,g)$ a Riemannian manifold $n\geq 3.$ We recall that the Weyl tensor $W$ is defined by the following decomposition formula
\begin{eqnarray}
\label{weyl}
R_{ijkl}&=&W_{ijkl}+\frac{1}{n-2}\big(R_{ik}g_{jl}+R_{jl}g_{ik}-R_{il}g_{jk}-R_{jk}g_{il}\big) \nonumber\\
 &&-\frac{R}{(n-1)(n-2)}\big(g_{jl}g_{ik}-g_{il}g_{jk}\big),
\end{eqnarray} where $R_{ijkl}$ stands for the Riemann curvature operator. Moreover, the Cotton tensor $C$ is  given by
\begin{equation}
\label{cotton} \displaystyle{C_{ijk}=\nabla_{i}R_{jk}-\nabla_{j}R_{ik}-\frac{1}{2(n-1)}\big(\nabla_{i}R\,
g_{jk}-\nabla_{j}R\,g_{ik}).}
\end{equation} These two tensors are related as follows
\begin{equation}
\label{cottonwyel} \displaystyle{C_{ijk}=-\frac{(n-2)}{(n-3)}\nabla_{l}W_{ijkl},}
\end{equation} provided $n\ge 4.$ In particular, it is easy to see that
\begin{equation}\label{simetC}
C_{ijk}=-C_{jik} \,\,\,\,\,\,\hbox{and}\,\,\,\,\,\,C_{ijk}+C_{jki}+C_{kij}=0.
\end{equation} Also, we have
\begin{equation}\label{freetraceC}
g_{ij}C_{ijk}=g_{ik}C_{ijk}=g_{jk}C_{ijk}=0.
\end{equation} Next, the Schouten tensor $A$ is defined by
\begin{equation}
\label{schouten} A_{ij}=R_{ij}-\frac{R}{2(n-1)}g_{ij}.
\end{equation} Combining Eqs. (\ref{weyl}) and (\ref{schouten}) we obtain the following decomposition
\begin{equation}
\label{WS} R_{ijkl}=\frac{1}{n-2}(A\odot g)_{ijkl}+W_{ijkl},
\end{equation} where $\odot$ is the Kulkarni-Nomizu product.  In addition, we recall that the Bach tensor on a Riemannian manifold $(M^n,\,g)$, $n\geq 4,$ is defined in term of the components of the Weyl tensor $W_{ikjl}$ as follows
\begin{equation}
\label{bach} B_{ij}=\frac{1}{n-3}\nabla_{k}\nabla_{l}W_{ikjl}+\frac{1}{n-2}R_{kl}W_{ikjl},
\end{equation} while for $n=3$ it is given by
\begin{equation}
\label{bach3} B_{ij}=\nabla_{k}C_{kij}={\rm{div}}\,(C)_{ij}.
\end{equation} For more details about these tensors, we refer to \cite{bach,besse}. We say that $(M^n,\,g)$ is Bach-flat when $B_{ij}=0.$ It is not difficult to check that if $(M^n,\,g)$ is either locally conformally flat or Einstein, then $(M^n,\,g)$ is Bach-flat.

For what follows, $M^4$ will denote an oriented $4$-dimensional manifold and $g$ is a Riemannian metric on $M^4$. As was pointed out, $4$-dimensional manifolds are special. For instance, given any local orthogonal frame $\{e_1, e_2, e_3, e_4\}$ on an open set of $M^4$ with dual basis $\{e^1, e^2, e^3, e^4\},$ there exists a unique bundle morphism $*$ called \textit{the Hodge star} (acting on bivectors) such that $$*(e^1 \wedge e^2)=e^3 \wedge e^4.$$ This implies that $*$ is an involution, that is, $*^2 = Id$. In particular, this implies that the bundle of $2$-forms on a $4$-dimensional oriented Riemannian manifold can be invariantly decomposed as a direct sum $\Lambda^2=\Lambda_{+}^{2} \oplus \Lambda_{+}^{2}$. Thus, one concludes that the Weyl tensor $W$ is an endomorphism of $\Lambda^2$ such that
\begin{equation}\label{W4d}
	W=W^{+}\oplus W^{-}.
\end{equation}

We recall that $dim_{\mathbb{R}}(\Lambda^2) = 6$ and $dim_{\mathbb{R}}(\Lambda^{\pm}) = 3$. Since the Weyl tensor is trace-free on any pair of indices, one obtains that
\begin{equation}\label{WT4}
	W^{+}_{pqrs}=\dfrac{1}{2}(W_{pqrs}+W_{pq\overline{r}\,\overline{s}}),
\end{equation} where $(\overline{r}\,\overline{s})$, for instance, stands for the dual of $(r\,s)$, that is, $(r\, s\,\overline{r}\,\overline{s}) = \sigma(1234)$ for some even permutation $\sigma$ in the set $\{1, 2, 3, 4\}$ (see \cite{Dillen}, Equation 6.17]). In particular, we have $$W^{+}_{1234}=\dfrac{1}{2}(W_{1234}+W_{1212}).$$

Before proceeding, we remember that a Riemannian manifold $(M^n,\,g)$ is a Miao-Tam critical metric if there exists a smooth function $f$ such that \eqref{eqMiaoTam} holds, which in local coordinates is expressed by
\begin{equation}
	\label{MTcoord}
	-(\Delta f)g_{ij}+\nabla_i \nabla_j\,f-f R_{ij}= g_{ij}.
\end{equation} In particular, for arbitrary dimension $n,$ tracing \eqref{MTcoord} we deduce that the function potential $f$ also satisfies the linear equation
\begin{equation}
\label{eqtrace}
(1-n)\Delta f-Rf=n
\end{equation} and 
\begin{equation}
\label{covMTcoord}
		-(\Delta f)\nabla_{i}f+\dfrac{1}{2}\nabla_i|\nabla f|^2 -f R_{ij}\nabla_{j}f=\nabla_{i}f.
\end{equation} Taking the covariant derivative of \eqref{eqtrace}, we get
\begin{equation}
	\label{coveqtrace}
	\nabla_{i}R\, f+R\nabla_{i}f+(n-1)\nabla_{i} \Delta f=0.
\end{equation} Furthermore, taking the covariant derivative in \eqref{MTcoord}, we obtain
\begin{equation}
\label{covMTcoord1}
-\nabla_{k} (\Delta f)g_{ij}+\nabla_{k} \nabla_i \nabla_j\,f-\nabla_{k}f R_{ij}-f\nabla_{k} R_{ij}= 0.
\end{equation} Moreover, by Eqs. \eqref{MTcoord} and \eqref{eqtrace}, it is easy to check that
\begin{equation}
\label{p1a}
f\mathring{Ric}=\mathring{Hess\,f},
\end{equation} where $\mathring{S}=S-\dfrac{{\rm tr}\,S}{n}g$ stands for the traceless of tensor $S.$ 

We now recall some useful properties on Miao-Tam critical metrics obtained by Barros, Di\'ogenes e Ribeiro \cite{BDR}. Indeed, since a Miao-Tam critical metric has necessarily constant scalar curvature (see \cite{miaotam}), they used  \eqref{MTcoord} to obtain the following lemma.

\begin{lemma}
	\label{lem1} 
	Let $(M^n,\,g,\,f)$ be a Miao-Tam critical metric. Then
	\begin{equation*}
		f\big(\nabla_{i}R_{jk}-\nabla_{j}R_{ik}\big)=R_{ijkl}\nabla_{l}f + \frac{R}{n-1}\big(\nabla_{i}f
		g_{jk}-\nabla_{j}f g_{ik}\big)- \big(\nabla_{i}f R_{jk}-\nabla_{j}f R_{ik}\big).
	\end{equation*}
\end{lemma}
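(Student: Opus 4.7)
The plan is to derive this identity as an antisymmetrized third-order Hessian commutator applied to $f$, exactly in the style of standard ``fundamental equation'' computations for gradient Ricci solitons or static metrics. Everything is driven by the Miao-Tam equation \eqref{MTcoord} together with the constancy of the scalar curvature $R$.

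First I would rewrite \eqref{MTcoord} as
\begin{equation*}
\nabla_i\nabla_j f \;=\; (1+\Delta f)\,g_{ij} + f\,R_{ij},
\end{equation*}
and then take a covariant derivative in the $k$-direction to obtain
\begin{equation*}
\nabla_k\nabla_i\nabla_j f \;=\; \nabla_k(\Delta f)\,g_{ij} + \nabla_k f\,R_{ij} + f\,\nabla_k R_{ij}.
\end{equation*}
This expression is symmetric in $(i,j)$ by construction but not in $(k,i)$, which is precisely what will let the Ricci tensor divergence appear upon antisymmetrization.

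Next I would antisymmetrize in the pair of outer indices $(i,j)$ after relabeling, i.e.\ compute
\[
\nabla_i\nabla_j\nabla_k f \;-\; \nabla_j\nabla_i\nabla_k f.
\]
On the one hand, substituting the formula above yields a combination of $\nabla(\Delta f)$ terms, $\nabla f \cdot R$ terms, and an $f\bigl(\nabla_i R_{jk}-\nabla_j R_{ik}\bigr)$ term. On the other hand, the Ricci commutation identity applied to the $1$-form $df$ gives
\[
\nabla_i\nabla_j\nabla_k f \;-\; \nabla_j\nabla_i\nabla_k f \;=\; R_{ijkl}\,\nabla^l f
\]
(with the sign dictated by the curvature conventions implicit in \eqref{weyl}). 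Equating these two expressions already isolates the quantity $f(\nabla_i R_{jk}-\nabla_j R_{ik})$ up to the unwanted $\nabla(\Delta f)$ contributions.

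To dispose of those, I would invoke constancy of $R$, which is guaranteed for any nontrivial Miao-Tam critical metric by \cite{miaotam}. Substituting $\nabla_i R = 0$ into the traced and differentiated equation \eqref{coveqtrace} gives
\[
\nabla_i(\Delta f) \;=\; -\,\frac{R}{n-1}\,\nabla_i f,
\]
so that
\[
\nabla_i(\Delta f)\,g_{jk} - \nabla_j(\Delta f)\,g_{ik} \;=\; -\,\frac{R}{n-1}\bigl(\nabla_i f\,g_{jk} - \nabla_j f\,g_{ik}\bigr).
\]
Plugging this in and solving for $f(\nabla_i R_{jk}-\nabla_j R_{ik})$ produces exactly the claimed identity. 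The only real obstacle is careful bookkeeping of signs and index placements in the Ricci commutation identity and the Miao-Tam equation; there is no conceptual difficulty, just the need to keep the conventions of \eqref{weyl} consistent throughout.
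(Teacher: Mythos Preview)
Your argument is correct and is exactly the standard derivation: differentiate \eqref{MTcoord}, antisymmetrize, apply the Ricci commutation identity, and eliminate $\nabla(\Delta f)$ via \eqref{coveqtrace} and $\nabla R=0$. The paper does not actually spell out a proof of this lemma---it only remarks that the identity follows from \eqref{MTcoord} together with constancy of $R$ and cites \cite{BDR}---so your write-up is precisely the computation the paper is alluding to.
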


Barros, Di\'ogenes and Ribeiro also introduced in \cite{BDR} the covariant $3$-tensor $T$ given by
\begin{eqnarray}
	\label{T}
	T_{ijk}&=&\frac{n-1}{n-2}\left(R_{ik}\nabla_{j}f-R_{jk}\nabla_{i}f\right)
	-\frac{R}{n-2}\left(g_{ik}\nabla_{j}f-g_{jk}\nabla_{i}f\right)\nonumber\\
	&&+\frac{1}{n-2}\left(g_{ik}R_{js}\nabla_{s}f-g_{jk}R_{is}\nabla_{s}f\right).
\end{eqnarray} It is important to highlight that $T_{ijk}$ was defined similarly to $D_{ijk}$ in \cite{CaoChen}. With this notation, they proved the following lemma.
\begin{lemma}\label{lemTCW} 
	Let $(M^n,g,f)$ be a Miao-Tam critical metric. Then the following identity holds:
	\begin{equation}\label{fCTW}
		fC_{ijk}=T_{ijk}+W_{ijkl}\nabla_{l}f.
	\end{equation}
\end{lemma}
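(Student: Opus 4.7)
The plan is to derive the identity $fC_{ijk}=T_{ijk}+W_{ijkl}\nabla_{l}f$ directly by combining Lemma \ref{lem1} (which already rewrites $f(\nabla_i R_{jk}-\nabla_j R_{ik})$ in terms of the full Riemann tensor contracted with $\nabla f$) with the Weyl decomposition \eqref{weyl}. Because a Miao-Tam critical metric has constant scalar curvature, the $\nabla R$ terms in the definition \eqref{cotton} of the Cotton tensor vanish, so $C_{ijk}=\nabla_i R_{jk}-\nabla_j R_{ik}$; this is the entry point that connects the left-hand side of \eqref{fCTW} to Lemma \ref{lem1}.

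First I would apply Lemma \ref{lem1} to get
\begin{equation*}
fC_{ijk}=R_{ijkl}\nabla_{l}f+\frac{R}{n-1}\bigl(\nabla_i f\,g_{jk}-\nabla_j f\,g_{ik}\bigr)-\bigl(\nabla_i f\,R_{jk}-\nabla_j f\,R_{ik}\bigr).
\end{equation*}
Next I would substitute \eqref{weyl} into $R_{ijkl}\nabla_l f$. Contracting the four Ricci terms and the two scalar terms of \eqref{weyl} with $\nabla_l f$ yields
\begin{equation*}
R_{ijkl}\nabla_l f=W_{ijkl}\nabla_l f+\frac{1}{n-2}\bigl(R_{ik}\nabla_j f-R_{jk}\nabla_i f+g_{ik}R_{js}\nabla_s f-g_{jk}R_{is}\nabla_s f\bigr)-\frac{R}{(n-1)(n-2)}\bigl(g_{ik}\nabla_j f-g_{jk}\nabla_i f\bigr).
\end{equation*}

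The remaining step is algebraic bookkeeping: insert this expansion into the previous display and collect the coefficients of the antisymmetric pairs $R_{ik}\nabla_j f-R_{jk}\nabla_i f$, $g_{ik}\nabla_j f-g_{jk}\nabla_i f$, and $g_{ik}R_{js}\nabla_s f-g_{jk}R_{is}\nabla_s f$. The Ricci coefficient becomes $\tfrac{1}{n-2}+1=\tfrac{n-1}{n-2}$ and the scalar coefficient becomes $-\tfrac{R}{(n-1)(n-2)}-\tfrac{R}{n-1}=-\tfrac{R}{n-2}$; comparing with the definition \eqref{T} one reads off exactly $T_{ijk}+W_{ijkl}\nabla_l f$ on the right-hand side.

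Since Lemma \ref{lem1} is assumed and the Weyl decomposition is an identity, there is no real geometric obstacle here; the only thing to watch is the sign/coefficient accounting after substitution, in particular that the two contributions to $g_{ik}\nabla_j f-g_{jk}\nabla_i f$ combine with a common denominator $n-2$ to cancel the $R/(n-1)$ factor correctly. This is a pure computation that can be executed in a few lines once the substitution is set up.
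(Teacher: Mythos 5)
Your proof is correct: the paper itself does not prove this lemma (it quotes it from \cite{BDR}), and your derivation --- using constancy of the scalar curvature to reduce $C_{ijk}$ to $\nabla_iR_{jk}-\nabla_jR_{ik}$, applying Lemma \ref{lem1}, and substituting the Weyl decomposition \eqref{weyl} into $R_{ijkl}\nabla_lf$ --- is exactly the standard argument used there, with the coefficient bookkeeping ($\tfrac{1}{n-2}+1=\tfrac{n-1}{n-2}$ and $-\tfrac{R}{(n-1)(n-2)}-\tfrac{R}{n-1}=-\tfrac{R}{n-2}$) checking out.
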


Now we ready to present the proof of Theorem \ref{thmA}.

\section{Proof of Theorem \ref{thmA}}
\label{proofA}

In this section, we shall present the proof of Theorem 1. To this end, we need to establish the following ingredient.
\begin{lemma}
	\label{keyresult}
	Let $(M^4, \,g)$ be a $4$-dimensional Miao-Tam metric satisfying \eqref{MTcoord} with ${\rm div}\, (W^+)=0$, and let $\Sigma_c=\{x\in M^4;f(x)=c\}$ be the level surface with respect to regular value $c$ of $f$. Then, for any local orthonormal frame $\{e_1,e_2,e_3,e_4\}$ with $e_4=\dfrac{\nabla f}{|\nabla f|}$ and  $\{e_1,e_2,e_3\}$ tangent to $\Sigma_c$, we have
		\begin{enumerate}
			\item $\nabla f$, whenever nonzero, is an eigenvector for $Ric$.
			\item $R_{4i}=0$ for any $i\in \{1,2,3\}$, and $e_4$ is an eigenvector of Ricci operator.
			\item $|\nabla f|^2$ are constants on $\Sigma_c$.
			\item The second fundamental form and the mean curvature of $\Sigma_c$ are, respectively,
				$$h_{ij}=\dfrac{(1+\Delta f)g_{ij}+fR_{ij}}{|\nabla f|}$$
			and
				$$H=\dfrac{3(1+\Delta f)+f(R-R_{44})}{|\nabla f|}.$$
			\item Around any regular point of $f,$ it holds
				\begin{equation}
					\label{ine1}
					3|Ric|^2\geq 3R^2_{44}+(R-R_{44})^2.
				\end{equation} Moreover, equality holds in (\ref{ine1}) if and only if any point of $\Sigma_c$ is umbilical. Also, $H$ is constant on $\Sigma_c$ and $W_{4ijk}=0$ for $i,j,k\in \{1,2,3\}$.
		\end{enumerate}
\end{lemma}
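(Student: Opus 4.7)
The plan is to handle (1) and (2) first, which are the only parts that rely on $\text{div}(W^{+})=0$; once they are in hand, the remaining assertions follow from the Miao--Tam equation by algebra. For (1), I would start from Lemma \ref{lemTCW}, namely $fC_{ijk}=T_{ijk}+W_{ijkl}\nabla_{l}f$, and combine it with the dimension-four identity \eqref{cottonwyel}, which gives $C_{ijk}=-2\,\text{div}(W)_{ijk}=-2\,\text{div}(W^{-})_{ijk}$ under the hypothesis. To prove that $R_{jk}\nabla_{k}f$ is parallel to $\nabla_{j}f$, I would split $W_{ijkl}\nabla_{l}f$ via the duality formula $W^{\pm}_{ijkl}=\tfrac{1}{2}(W_{ijkl}\pm W_{ij\bar{k}\bar{l}})$, contract the identity of Lemma \ref{lemTCW} with $\nabla_{i}f$, and use the self-dual symmetries of $W^{+}$ together with $\text{div}(W^{+})=0$ to cancel the Weyl contributions. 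What should remain is a purely algebraic identity in $T_{ijk}\nabla_{i}f$ that forces $\text{Ric}(\nabla f)\parallel\nabla f$. Part (2) then just restates (1) in the frame with $e_{4}=\nabla f/|\nabla f|$, giving $R_{4i}=0$ for $i\in\{1,2,3\}$.

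For (3) and (4) I would plug directly into \eqref{MTcoord} in the form $\text{Hess}\,f=(1+\Delta f)g+f\,\text{Ric}$. Taking $(\nabla f,X)$ with $X$ tangent to $\Sigma_{c}$, the right-hand side vanishes by orthogonality and by (1), while the left-hand side equals $\tfrac{1}{2}X(|\nabla f|^{2})$; this proves (3). For (4), the second fundamental form of $\Sigma_{c}$ with unit normal $\nu=\nabla f/|\nabla f|$ is $h_{ij}=\text{Hess}\,f(e_{i},e_{j})/|\nabla f|$ on tangent vectors, and substituting \eqref{MTcoord} yields the displayed formula; tracing over $\{e_{1},e_{2},e_{3}\}$ and using $R_{11}+R_{22}+R_{33}=R-R_{44}$ gives the formula for $H$.

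For the inequality \eqref{ine1}, I would use $R_{4i}=0$ from (2) to estimate
\[
|\text{Ric}|^{2}=R_{44}^{2}+\sum_{i,j=1}^{3}R_{ij}^{2}\geq R_{44}^{2}+\sum_{i=1}^{3}R_{ii}^{2}\geq R_{44}^{2}+\tfrac{1}{3}(R-R_{44})^{2},
\]
the first step dropping off-diagonal squares and the second being Cauchy--Schwarz applied to $\sum_{i=1}^{3}R_{ii}=R-R_{44}$; multiplication by $3$ yields \eqref{ine1}. Equality forces $R_{ij}=\tfrac{R-R_{44}}{3}\delta_{ij}$ on $\{e_{1},e_{2},e_{3}\}$, and substituting into (4) gives $h_{ij}=(H/3)g_{ij}$, so $\Sigma_{c}$ is umbilical. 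For the constancy of $H$ on $\Sigma_{c}$, the formula in (4) together with \eqref{eqtrace} and part (3) reduces matters to constancy of $R_{44}$ on $\Sigma_{c}$, which I would extract from the contracted second Bianchi identity $\nabla^{j}R_{ij}=0$ applied to the explicit eigen-decomposition of $\text{Ric}$ forced by the equality case. Finally, $W_{4ijk}=0$ for $i,j,k\in\{1,2,3\}$ follows by combining \eqref{weyl} with $R_{4i}=0$ to reduce $R_{ijk4}$ to $W_{ijk4}$, and then invoking the Codazzi equation $R_{ijk4}=\nabla_{i}h_{jk}-\nabla_{j}h_{ik}$, which vanishes since $h$ is a constant multiple of the induced metric on $\Sigma_{c}$.

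The main obstacle is (1). The harmonic hypothesis constrains only the self-dual half of $W$, so one cannot directly conclude that the Cotton tensor vanishes, as one could under the stronger hypothesis $\text{div}(W)=0$ (the Kim--Shin setting); the dual symmetries of $W^{+}$ must be exploited carefully to eliminate the residual $W^{-}$-contribution in the contracted form of Lemma \ref{lemTCW}.
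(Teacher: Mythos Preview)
Your overall architecture is right: item~(1) is the only place the hypothesis ${\rm div}(W^{+})=0$ enters, and once (1)--(2) are in hand the remaining items follow from the Miao--Tam equation by the algebra you describe. Items (3) and (4) are exactly as you say, and your derivation of the inequality in (5) via Cauchy--Schwarz on the three tangential Ricci eigenvalues is equivalent to the paper's computation of $|h-\tfrac{H}{3}g|^{2}\geq 0$; both give equality if and only if the tangential block of $Ric$ is pure trace, i.e.\ $\Sigma_{c}$ is totally umbilical.

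The step that needs repair is your plan for (1). Contracting Lemma~\ref{lemTCW} with $\nabla_{i}f$ does \emph{not} kill the Weyl contribution: $W_{ijkl}\nabla_{i}f\nabla_{l}f$ has no reason to vanish pointwise, and ${\rm div}(W^{+})=0$ is a differential, not an algebraic, constraint on $W$, so it cannot by itself ``cancel'' such a term. The paper proceeds in two moves you are missing. First, it rewrites the hypothesis as a Cotton identity: since $W^{+}_{pqrs}=\tfrac{1}{2}(W_{pqrs}+W_{pq\bar r\bar s})$ and $\nabla_{i}W_{ijkl}=\tfrac12 C_{klj}$, one gets $4\nabla_{i}W^{+}_{ijkl}=C_{klj}+C_{\bar k\bar lj}$, so that ${\rm div}(W^{+})=0$ is equivalent to $C_{ijk}+C_{\bar i\bar jk}=0$. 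Second, it \emph{adds} Lemma~\ref{lemTCW} to its Hodge dual (replacing $(i,j)$ by $(\bar i,\bar j)$), so that the left side vanishes, and only then contracts with $\nabla_{k}f$: the Weyl terms $W_{ijkl}\nabla_{k}f\nabla_{l}f$ and $W_{\bar i\bar jkl}\nabla_{k}f\nabla_{l}f$ now die by antisymmetry in $(k,l)$, leaving
\[
(T_{ijk}+T_{\bar i\bar jk})\nabla_{k}f=0,\qquad T_{ijk}\nabla_{k}f=R_{ik}\nabla_{k}f\,\nabla_{j}f-R_{jk}\nabla_{k}f\,\nabla_{i}f.
\]
Evaluated in a frame diagonalizing $Ric$ at a regular point, this produces a $3\times 3$ system coupling $(R_{ii}-R_{jj})\nabla_{i}f\,\nabla_{j}f$ with their duals, and a short case analysis on the number of nonzero components $\nabla_{i}f$ forces $\nabla f$ to be a Ricci eigenvector. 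The two ingredients absent from your sketch are the dual sum (so the Cotton side vanishes) and the correct contraction index (so the Weyl side vanishes).

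A smaller point concerns the constancy of $H$ in the equality case. Your proposed route via the contracted second Bianchi identity applied to the eigen-decomposition of $Ric$ can in principle be pushed through, but it is awkward because the adapted frame is not parallel. The paper's argument is a one-liner: once $\Sigma_{c}$ is umbilical with $h_{ij}=\tfrac{H}{3}g_{ij}$, tracing the Codazzi equation $R_{4ijk}=\nabla_{k}h_{ij}-\nabla_{j}h_{ik}$ over $i,k$ gives $0=R_{4j}=-\tfrac{2}{3}\nabla_{j}H$, using item~(2). This same Codazzi computation then yields $R_{4ijk}=0$ directly, and $W_{4ijk}=0$ follows from \eqref{weyl} with $R_{4i}=0$, exactly as in your final paragraph.
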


\begin{proof}
		To begin with, we shall compute the value of ${\rm div}\, (W^{+})$ on Miao-Tam critical manifolds satisfying \eqref{MTcoord}. By Eqs. \eqref{cottonwyel} and \eqref{WT4}, we get
		\begin{eqnarray*}
			\nabla_{i} W^{+}_{ijkl}=\dfrac{1}{2}\left(\nabla_{i} W_{ijkl}+\nabla_{i} W_{ij\overline{k}\,\overline{l}}\right)
		\end{eqnarray*}
		and
		\begin{eqnarray*}
			\nabla_{i} W_{ijkl}=\dfrac{1}{2}C_{klj} \mbox{ and } \nabla_{i} W_{ij\overline{k}\,\overline{l}}=\dfrac{1}{2}C_{\overline{k}\,\overline{l}j},
		\end{eqnarray*} 
		so that
		\begin{equation}\label{iW}
			4\nabla_{i} W^{+}_{ijkl}=C_{klj}+C_{\overline{k}\,\overline{l}j}.
		\end{equation} Therefore, our assumption yields
		\begin{equation}\label{C}
			C_{klj}+C_{\overline{k}\,\overline{l}j}=0.
		\end{equation} Moreover, by \eqref{fCTW} and \eqref{C}, we obtain
		\begin{equation}\label{meq}
			f(C_{ijk}+C_{\overline{i}\,\overline{j}k})=0=T_{ijk}+T_{\overline{i}\,\overline{j}k}+(W_{ijkl}+W_{\overline{i}\,\overline{j}kl})\nabla_{l}f.
		\end{equation}
		
		From the anti-symmetry of the Weyl tensor and (\ref{meq}), one sees that
		\begin{equation}\label{T0}
			(T_{ijk}+T_{\overline{i}\,\overline{j}k})\nabla_{k}f=0.
		\end{equation}
		Next, since $\nabla_{i}f=g_{ij}\nabla_{j}f$, we may use \eqref{T} to infer
		\begin{eqnarray*}
			T_{ijk}\nabla_{k}f&=&\frac{3}{2}\left(R_{ik}\nabla_{j}f\nabla_{k}f-R_{jk}\nabla_{i}f\nabla_{k}f\right)\\
			&&-\frac{R}{2}\left(g_{ik}\nabla_{j}f\nabla_{k}f-g_{jk}\nabla_{i}f\nabla_{k}f\right)\\
			&&+\frac{1}{2}\left(g_{ik}R_{js}\nabla_{s}f\nabla_{k}f-g_{jk}R_{is}\nabla_{s}f\nabla_{k}f\right)\\
			&=&\frac{3}{2}\left(R_{ik}\nabla_{j}f\nabla_{k}f-R_{jk}\nabla_{i}f\nabla_{k}f\right)\\
			&&+\frac{1}{2}\left(R_{jk}\nabla_{k}f\nabla_{i}f-R_{ik}\nabla_{k}f\nabla_{j}f\right),
		\end{eqnarray*} 
		consequently,
		
		\begin{equation}\label{Tkf}
			T_{ijk}\nabla_{k}f=R_{ik}\nabla_{j}f\nabla_{k}f-R_{jk}\nabla_{i}f\nabla_{k}f.
		\end{equation} 
		
		Therefore, from \eqref{T0} and \eqref{Tkf}, one obtains that
		\begin{equation*}
			(R_{ik}\nabla_{j}f\nabla_{k}f-R_{jk}\nabla_{i}f\nabla_{k}f)+(R_{\overline{i}k}\nabla_{\overline{j}}f\nabla_{k}f-R_{\overline{j}k}\nabla_{\overline{i}}f\nabla_{k}f)=0,
		\end{equation*} 
		or equivalently,
		
		\begin{equation}\label{MainEquation}
			(R_{ik}\nabla_{j}f-R_{jk}\nabla_{i}f)\nabla_{k}f+(R_{\overline{i}k}\nabla_{\overline{j}}f-R_{\overline{j}k}\nabla_{\overline{i}}f)\nabla_{k}f=0.
		\end{equation}

In order to proceed, following \cite{BLRi,Benedito}, we need consider an orthonormal frame $\{e_1,e_2,e_3,e_4\}$ diagonalizing the Ricci curvature $Ric$ at a point $q,$ namely, $$Ric_q(e_i,e_j)=R_{ii}(q)\delta_{ij},$$ such that $\nabla f(q)\neq 0,$ with associated eigenvalues $R_{kk}$, $k\in \{1,2,3,4\}$, res\-pectively. We highlight that the regular points of $\{x\in M^4; (\nabla f)(x)\neq 0\}$ is dense on $(M^4, \, g),$ otherwise, since $f$ is real-analytic (see Proposition 2.1 in \cite{CEM}), $f$ must be constant in an open set  $M^4.$ Therefore, from now on, up to explicit mention, we restrict our attention to regular points.

Next, it follows from \eqref{MainEquation} the the following equations
		\begin{equation}\label{sist}
			\left\{\begin{array}{lll}
				\left(R_{11}-R_{22}\right)\nabla_1 f \nabla_2 f+\left(R_{33}-R_{44}\right)\nabla_3 f \nabla_4 f=0,\\
				\left(R_{11}-R_{33}\right)\nabla_1 f \nabla_3 f+\left(R_{44}-R_{22}\right)\nabla_4 f \nabla_2 f=0,\\
				\left(R_{11}-R_{44}\right)\nabla_1 f \nabla_4 f+\left(R_{22}-R_{33}\right)\nabla_2 f \nabla_3 f=0.\\
			\end{array} \right.
		\end{equation}

		Now, we are going to prove the first item of the lemma. Indeed, taking into account that $\nabla f\neq 0,$ one obtains that at least one of the $\left(\nabla_j f\right) \neq 0$, $1\leq j \leq 4.$ If this occurs for exactly one of them, then $\nabla f=\left( \nabla_j f \right) e_j$  for some $j,$  and therefore $$Ric(\nabla f)=R_{jj}\nabla f.$$ On the other hand, if we have $\left(\nabla_j f\right) \neq 0$ for two directions, without loss generality, we can suppose that $\nabla_1 f \neq 0$, $\nabla_2 f \neq 0$, $\nabla_3 f =0$ and $\nabla_4 f =0.$ Then, it follows from \eqref{sist} that $R_{11}=R_{22}=\lambda,$ and in such case, we infer $\nabla f=\left(\nabla_1 f\right)e_1+\left(\nabla_2 f\right)e_2$. Hence,
		\begin{eqnarray}\nonumber
			Ric (\nabla f)&=& Ric\left(\left(\nabla_1 f\right)e_1+\left(\nabla_2 f\right)e_2\right)\\
			&=&\left(\nabla_1 f\right)R_{11}e_1+\left(\nabla_2 f\right)R_{22}e_2=\lambda \nabla f.
		\end{eqnarray} The case $\left(\nabla_j f\right) \neq 0$ for three directions is analogous. Now, it remains to analyze the case $\left(\nabla_j f\right) \neq 0$ for $j=1,2,3,4.$ In this situation, we use once more \eqref{sist} to obtain
		\begin{eqnarray}\nonumber
			&&\left(R_{11}-R_{22}\right)^2(\nabla_1 f \nabla_2 f)^2+\left(R_{33}-R_{44}\right)^2(\nabla_3 f \nabla_4 f)^2\\ \nonumber
			&&+\left(R_{11}-R_{33}\right)^2(\nabla_1 f \nabla_3 f)^2+\left(R_{44}-R_{22}\right)^2(\nabla_4 f \nabla_2 f)^2\\ 
			&&+\left(R_{11}-R_{44}\right)^2(\nabla_1 f \nabla_4 f)^2+\left(R_{22}-R_{33}\right)^2(\nabla_2 f \nabla_3 f)^2=0.
		\end{eqnarray} Whence, $R_{11} = R_{22} = R_{33} = R_{44}.$ Therefore, $\nabla f$ is an eigenvector for $Ric$, which proves our claim. 

The second assertion follows from the fact that we can choose $e_4 = \dfrac{\nabla f}{|\nabla f|}$ and $\nabla f$ is eigenvector for $Ric.$ 

Again, since $\nabla f$ is an eigenvector of the Ricci tensor, by choosing an orthonormal frame $\{e_{1},\,e_{2},\,e_{3},\,e_{4}=\frac{\nabla f}{|\nabla f|}\},$ we have $\nabla_i f=g(e_i,\nabla f)=0$ for any $i\in \{1,2,3\}$ and therefore, it follows from \eqref{covMTcoord} that $\nabla_i|\nabla f|^2=0,$ $\forall i=1,2,3.$ Thus, $|\nabla f|^2$ is constant in $\Sigma_c,$ which proves to the third assertion.

Moreover, 
	\begin{eqnarray*}
		h_{ij}&=&g\left(\nabla_{i}\left(\dfrac{\nabla f}{|\nabla f|}\right),e_j\right)\\
		&=&\dfrac{1}{|\nabla f|}\nabla_i \nabla_j\,f.
	\end{eqnarray*}
Now, since $\nabla_i f=0$, from \eqref{MTcoord} we get
\begin{eqnarray*}
	h_{ij}&=&\dfrac{1}{|\nabla f|}((1+\Delta f)g_{ij}+fR_{ij}).
\end{eqnarray*} In particular, tracing the above relation over $i$ and $j$ one sees that $$H=\dfrac{3(1+\Delta f)+f(R-R_{44})}{|\nabla f|}.$$

Observe that
\begin{eqnarray*}
	0\leq \left|h_{ij}-\dfrac{H}{3}g_{ij}\right|^2&=&\left|\dfrac{3(1+\Delta f)g_{ij}+3fR_{ij}}{3|\nabla f|}-\dfrac{3(1+\Delta f)+f(R-R_{44})}{3|\nabla f|}g_{ij}\right|^2\\
	&=&\dfrac{1}{|\nabla f|^2}\left|\dfrac{f(R-R_{44})}{3}g_{ij}-fR_{ij}\right|^2.
\end{eqnarray*}

Therefore, by using the fact that $$|Ric|^2=|R_{ij}|^2+2\sum_{i=1}^{3}R_{i4}^2+R_{44}^2$$ and the first item, we obtain
\begin{eqnarray*}
	0\leq |\nabla f|^2\left|h_{ij}-\dfrac{H}{3}g_{ij}\right|^2&=&\left|\dfrac{f(R-R_{44})}{3}g_{ij}-fR_{ij}\right|^2\\
	&=&f^2\left[3\left(\dfrac{R-R_{44}}{3}\right)^2-2\dfrac{(R-R_{44})^2}{3}+|R_{ij}|^2\right]\\
	&=&f^2\left(|Ric|^2-R^2_{44}-\dfrac{(R-R_{44})^2}{3}\right)
\end{eqnarray*} which proves \eqref{ine1}.

Suppose that the equality holds in above expression, then every point of $\Sigma_c$ is umbilical, that is, $$h_{ij}=\dfrac{H}{3}g_{ij}.$$ So, the Codazzi equation give us, for $i,j,k\in \{1,2,3\},$ $$R_{4ijk}=\nabla_{k}h_{ij}-\nabla_{j}h_{ik},$$ and thereby,
\begin{eqnarray*}
	0&=&R_{4j}\\
	&=&\nabla_{k} h_{kj}-\nabla_{j}H\\
	&=&\nabla_{k} \dfrac{H}{3}g_{kj}-\nabla_{j}H\\
	&=&\dfrac{1}{3}\nabla_{j} H-\nabla_{j}H\\
	&=&-\dfrac{2}{3}\nabla_{j}H,
\end{eqnarray*} i.e., $H$ is constant in $\Sigma_c$. Whence,

\begin{eqnarray*}
	R_{4ijk}&=&\nabla_{k} h_{ij}-\nabla_{j}h_{ik}\\
	&=&\nabla_{k} \dfrac{H}{3}g_{ij}-\nabla_{j}\dfrac{H}{3}g_{ik}\\
	&=&\dfrac{H}{3}(\nabla_{k}\,g_{ij}-\nabla_{j}\,g_{ik})\\
	&=&0.
\end{eqnarray*}

Finally, since $\nabla_{i} f=0 $ and $R_{4i}=0$  for $i\in \{1,2,3\},$ we have, from the decomposition formula of the Weyl tensor,
\begin{eqnarray*}
	R_{4ijk}&=&W_{4ijk}+\dfrac{1}{2}\left(R_{4j}g_{ik}+R_{ik}g_{4j}-R_{4k}g_{ij}-R_{ij}g_{4k}\right)\\
	&&-\dfrac{R}{6}\left(g_{ik}g_{4j}-g_{4k}g_{ij}\right),
\end{eqnarray*}and then $W_{4ijk}=0$ $i,j,k\in \{1,2,3\}.$ This finishes the proof of the lemma. 

\end{proof}

Now, we are going to finish the proof of Theorem \ref{thmA}.

\subsection{Conclusion of the Proof of Theorem \ref{thmA}}

\begin{proof}

To complete the proof of the theorem we follow \cite{BLRi}. Indeed, we already know that the set $\{x\in M^4; (\nabla f)(x)\neq 0\}$ is dense on $(M^4, \, g).$ In particular, \eqref{ineth} and the fifth item of Lemma \ref{keyresult} imply that $W(e_i,e_j,e_k,\nabla f)=0$ for $i,\,j,\,k\in \{1,2,3\}$. Since $W(e_i,e_j,e_k,\nabla f)=0$  and this jointly with the second item of Lemma \ref{keyresult}, \eqref{T} and \eqref{fCTW} yields $C_{ijk}=0.$ 

Proceeding, since ${\rm div}\, (W^+)=0$ and $C_{ijk}=0,$ it follows from \eqref{iW} that $C_{\overline{i}\,\overline{j}k}=0,$ i.e.,
\begin{equation}
	0=C_{\overline{1}\,\overline{2}k}=C_{34k},\,\,\, 0=C_{\overline{1}\,\overline{3}k}=-C_{24k}\,\,\, \mbox { and }\,\,\, 	0=C_{\overline{2}\,\overline{3}k}=C_{14k}.
\end{equation}

By the symmetric properties of the Weyl tensor, the first item of Lemma \ref{keyresult} and \eqref{fCTW}, one obtains that $C_{ij4}=0$, with $i,j\in \{1,2,3,4\}$. Also, since the Cotton tensor is skew-symmetric, we infer $C_{ijk}=0$, for $i,j,k\in \{1,2,3,4\}.$ Whence, one sees from \eqref{cottonwyel} that ${\rm div}\,(W)=0$ in $(M^4,\,g).$ Finally, it suffices to apply Theorem 10.2 in \cite{Kim} to conclude that the manifold $M^4$ is isometric to a geodesic ball in a simply connected space form $\mathbb{R}^{4}$, $\mathbb{H}^{4}$ or $\mathbb{S}^{4}.$ So, the proof is finished. 
 
\end{proof}

\begin{acknowledgement} The author want to thank the referee for the careful reading, relevant remarks and valuable suggestions.
\end{acknowledgement}

\end{document}